\documentclass[10pt,a4paper]{amsart}

\theoremstyle{plain}
\newtheorem{theorem}{Theorem}[section]
\newtheorem{lemma}[theorem]{Lemma}

\theoremstyle{definition}

\theoremstyle{remark}
\newtheorem{remark}[theorem]{Remark}

\newcommand{\hc}{\ensuremath{\mathcal{H}}}

\def\bin #1#2 {\left( \matrix { #1 \cr #2 \cr } \right) }

\begin{document}

\title[On the topology of a resolution of  isolated singularities, II]
{On the topology of a resolution of isolated singularities, II}

\author{Vincenzo Di Gennaro }
\address{Universit\`a di Roma \lq\lq Tor Vergata\rq\rq, Dipartimento di Matematica,
Via della Ricerca Scientifica, 00133 Roma, Italy.}
\email{digennar@axp.mat.uniroma2.it}

\author{Davide Franco }
\address{Universit\`a di Napoli
\lq\lq Federico II\rq\rq, Dipartimento di Matematica e
Applicazioni \lq\lq R. Caccioppoli\rq\rq, Via Cintia, 80126
Napoli, Italy.} \email{davide.franco@unina.it}

\abstract  Let $Y$ be a complex projective  variety of dimension $n$
with isolated singularities, $\pi:X\to Y$ a resolution of
singularities, $G:=\pi^{-1}\left(\rm{Sing}(Y)\right)$ the
exceptional locus. From the Decomposition Theorem one knows that the
map $H^{k-1}(G)\to H^k(Y,Y\backslash {\rm{Sing}}(Y))$ vanishes for
$k>n$. It is also known that, conversely, assuming this vanishing
one can prove the Decomposition Theorem for $\pi$ in few pages. The
purpose of the present paper is to exhibit a direct proof of the
vanishing. As a consequence, it follows a complete and short proof
of the  Decomposition Theorem for $\pi$, involving only ordinary
cohomology.

\bigskip\noindent {\it{Keywords}}: Projective variety, Isolated singularities,
Resolution of singularities, Derived category, Intersection
cohomology, Decomposition Theorem, Hodge theory.

\medskip\noindent {\it{MSC2010}}\,: Primary 14B05; Secondary 14C30, 14E15, 14F05, 14F43, 14F45, 32S20, 32S35,
32S60, 58A14, 58K15.

\endabstract
\maketitle

\section{Introduction}
Consider an $n$-dimensional  integral complex projective variety $Y$
with {\it isolated singularities}. Fix a {\it resolution of
singularities} $\pi: X\rightarrow Y$ of $Y$. This means that $X$ is
an irreducible and smooth  projective variety, and $\pi$ is a
birational morphism inducing an isomorphism $X\backslash
\pi^{-1}({\rm{Sing}}(Y))\cong Y\backslash {\rm{Sing}}(Y)$. In this case, the celebrated
{\it Decomposition Theorem} of Deligne, Gabber, Beilinson, and Bernstein \cite{BBD}, assumes the following form:

\begin{theorem}[The Decomposition Theorem for varieties with isolated singularities]
\label{DecThm} In $D^b(Y)$, we have a decomposition
$$R\,\pi_*\mathbb Q_X\cong IC^{\bullet}_Y[-n]\oplus \mathcal H^{\bullet},$$
where $\mathcal H^{\bullet}$ is quasi isomorphic to a skyscraper
complex on ${\rm{Sing}}(Y)$. Furthermore,  we have
\begin{enumerate}
\item $\hc^k(\mathcal H^{\bullet})\cong H^k(G)$, for all $k\geq n$,
\item $\hc^k(\mathcal H^{\bullet})\cong H_{2n-k} (G)$, for all $k< n$,
\end{enumerate}
where  $G:=\pi ^{-1}({\rm{Sing}}(Y))$, and $H^k(G)$ and
$H_{2n-k}(G)$ have $\mathbb Q$-coefficients.
\end{theorem}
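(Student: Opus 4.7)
The plan follows the two-step strategy highlighted in the abstract. The key is the vanishing assertion
\begin{equation*}
H^{k-1}(G)\longrightarrow H^k(Y,Y\setminus {\rm{Sing}}(Y))\quad\text{is the zero map for every } k>n, \tag{$\ast$}
\end{equation*}
after which both the splitting and the description of $\mathcal H^\bullet$ will follow formally. Proving $(\ast)$ directly, without invoking the Decomposition Theorem itself, is where I expect the main difficulty to lie.

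To establish $(\ast)$ I would localize at each singular point. Since ${\rm{Sing}}(Y)$ is discrete, both sides split as direct sums indexed by $y\in {\rm{Sing}}(Y)$: by proper base change $H^{k-1}(G)$ localizes to $H^{k-1}(G_y)$, while $H^k(Y,Y\setminus\{y\})$ identifies, via $\pi$ and excision, with the local cohomology $H^k_{G_y}(X)$. Through the long exact sequence of the pair $(X,X\setminus G_y)$ and Poincar\'e--Lefschetz duality on the smooth projective $X$, $(\ast)$ translates into a statement about a pairing between $H^{k-1}(G_y)$ and $H_{2n-k}(G_y)$ in the range $k>n$. The natural attack is by induction on $n$, cutting $X$ with a generic hyperplane section and applying the hard Lefschetz theorem, combined with weight considerations on the mixed Hodge structure of $H^*(G_y)$. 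This is the delicate and novel part of the argument.

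Granted $(\ast)$, the rest is formal. Set $\fc^\bullet:=R\pi_*\bQ_X$. Over the smooth locus $\pi$ is an isomorphism, so $\fc^\bullet$ agrees with $IC^\bullet_Y[-n]$ there; the universal property of the intermediate extension gives a canonical morphism $\fc^\bullet\to IC^\bullet_Y[-n]$ whose cone $\mathcal H^\bullet$ is a skyscraper on ${\rm{Sing}}(Y)$, and $(\ast)$ is precisely the obstruction to splitting the associated distinguished triangle. Self-duality of $\fc^\bullet$ up to the shift $[2n]$ (Poincar\'e duality on the smooth $X$) together with that of $IC^\bullet_Y$ lets one symmetrize the resulting section into a genuine splitting in $D^b(Y)$. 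The identification of $\hc^k(\mathcal H^\bullet)$ is then a stalk computation at each $y\in {\rm{Sing}}(Y)$: for $k\geq n$ the support condition on $IC$ gives $\hc^k(IC^\bullet_Y[-n])_y=0$, whence $\hc^k(\mathcal H^\bullet)_y\cong(R^k\pi_*\bQ_X)_y\cong H^k(G_y)$ by proper base change; for $k<n$, Verdier duality applied to the splitting yields $\mathbb D\mathcal H^\bullet\cong\mathcal H^\bullet[2n]$, so $\hc^k(\mathcal H^\bullet)\cong\hc^{2n-k}(\mathcal H^\bullet)^\vee\cong H^{2n-k}(G)^\vee\cong H_{2n-k}(G)$ by the previous case.
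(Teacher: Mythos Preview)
Your two-step architecture---prove $(\ast)$ directly, then derive the splitting formally---is exactly the paper's plan; the formal second step is in fact outsourced to the companion paper \cite{DGF3}, and your sketch of it is in the right spirit. The entire content of the present paper is the direct proof of $(\ast)$, and there your proposal has a genuine gap.

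The attack on $(\ast)$ by ``induction on $n$, cutting $X$ with a generic hyperplane section'' does not get started: since ${\rm Sing}(Y)$ is finite, a generic hyperplane of $Y$ misses it entirely, so the induced section of $X$ misses $G$ and carries no information about the map in $(\ast)$; a hyperplane through some $y\in{\rm Sing}(Y)$ need not have isolated singularities, nor need $\pi$ restrict to a resolution. (Your identification of $H^k(Y,Y\setminus\{y\})$ with $H^k_{G_y}(X)$ is also off: for a tubular neighborhood $N$ of $G_y$ and $V=\pi(N)$ one has $N\setminus G_y\cong V\setminus\{y\}$, but $N$ retracts onto $G_y$ while $V$ is a contractible cone, so the two relative groups differ.)

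The paper proceeds differently and with no induction on $\dim Y$. It first reduces (Lemmas \ref{resolutionf} and \ref{reduction}) to a resolution with $G$ simple normal crossing and a very ample class of the special form $[\mathcal M]=\pi^*[\mathcal L]+\sum_j a_j[G_j]$. The vanishing is then recast as bijectivity of a map $\epsilon_i:H_{n+i}(G)\to H^{n+i}(G)$ (cap by $\mu^i$ followed by push--pull through $X$), and proved by a diagram chase using three inputs: injectivity of $H_{n+i}(G)\to H_{n+i}(X)$ (from \cite{DGF3}); surjectivity of the push-forward from the normalization $\widetilde G=\coprod_j G_j$ (a mixed-Hodge weight argument, Lemma \ref{surnn}); and a Steenbrink-type inclusion $\ker\beta\subseteq\ker\gamma$ (Lemma \ref{Steenbrinknn}), established via the Hodge--Riemann bilinear relations by exploiting $[\pi^*\mathcal L]\cup[G_j]=0$, which is exactly what the special form of $[\mathcal M]$ buys.
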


In \cite{GMtm} Goresky and MacPherson remarked that from previous
theorem one deduces the following vanishing, concerning ordinary
cohomology (compare also with \cite[(1.11) Theorem]{Steenbrink}):
\begin{equation}
\label{vanishing} H^{k-1}(G)\to H^k(Y,Y\backslash {\rm{Sing}}(Y)) \hskip2mm \textrm{vanishes
for} \hskip2mm k>n.
\end{equation}
More recently, in \cite{DGF3} we observed that, conversely, assuming
the vanishing (\ref{vanishing}) one can prove Theorem \ref{DecThm}
in few pages \cite[Theorem 3.1]{DGF3}.

\smallskip
Continuing \cite{DGF3}, in the present paper we give a direct proof of the vanishing (\ref{vanishing}), without using
Theorem \ref{DecThm}. Therefore, combining with \cite[Theorem 3.1]{DGF3}, it follows
a complete and short proof of the  Decomposition Theorem for $\pi$,
involving only ordinary cohomology.

\smallskip
Our proof of the vanishing (\ref{vanishing}) relies on an argument
similar to that developed by Navarro in  \cite[(5.1)
Proposition]{N}. First, using certain preliminary results we already
stated in \cite[Lemma 4.1 and Lemma 4.2]{DGF3}, we reduce to the
case the exceptional locus $G=\pi ^{-1}({\rm{Sing}}(Y))$ is a {\it
simple normal crossing divisor} \cite[p. 240]{L1}, and $X$ admits an
ample line bundle of the form $\pi^*(\mathcal L)\otimes \mathcal
O_X(D)$, where $\mathcal L$ is an ample line bundle on $Y$, and $D$
a divisor supported on $G$ (see Lemma \ref{resolutionf} and Lemma
\ref{reduction} below). Next, we conclude using again \cite[Lemma
4.1]{DGF3}, general properties of mixed Hodge Theory, and a slight
generalization of a Lemma of Steenbrink appearing in \cite[p.
288]{N} (see Lemma \ref{b}, Lemma \ref{surnn} and Lemma
\ref{Steenbrinknn} below).

\section{The proof of the vanishing}

We need the  following two lemmas. The first one is certainly
well known, but we prove it for lack of a suitable reference.

\begin{lemma}\label{resolutionf}
Let $\pi:X\to Y$ be a resolution of singularities of $Y$. Then
there exists a resolution of singularities $p: Z\to Y$ of $Y$
satisfying the following conditions:

\smallskip
(1) there is a morphism $\pi_X:Z\to X$ such that $p=\pi\circ
\pi_X$;

\smallskip
(2) $\Gamma :=p ^{-1}({\rm{Sing}}(Y))$ is a simple normal crossing
divisor (s.n.c.) on $Z$;

\smallskip
(3) for every ample line bundle $\mathcal L$ on $Y$, there are
integers $a,a_1,\dots,a_r$ ($a>0$) such that $p^*(\mathcal
L^{\otimes a}) \otimes \mathcal
O_Z\left(\sum_{i=1}^{r}a_i\Gamma_i\right)$ is an ample line bundle
on $Z$, the sum being taken over the components of $\Gamma$.
\end{lemma}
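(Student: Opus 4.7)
The plan is to realize $p:Z\to Y$ as a sequence of blowups of $Y$ along smooth centers lying over the iterated total transforms of $\mathrm{Sing}(Y)$, in such a way that $Z$ also dominates $X$. Once such a presentation of $Z$ is available, conditions (1) and (2) follow directly from Hironaka's theorem, while (3) follows by induction on the number of blowups.

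To construct $Z$, I would first invoke Hironaka's principalization of the ideal sheaf of $\mathrm{Sing}(Y)$, which yields a composition of blowups $Y=Y_0\leftarrow Y_1\leftarrow\cdots\leftarrow Y_k$ along smooth centers lying over $\mathrm{Sing}(Y)$, with $Y_k$ smooth and the total transform of $\mathrm{Sing}(Y)$ an s.n.c.\ divisor. Next, I would eliminate the indeterminacy of the birational map from $Y_k$ to $X$ by further blowups of $Y_k$ along smooth centers in its indeterminacy locus; since $\pi$ is an isomorphism away from $\mathrm{Sing}(Y)$, this locus lies inside the preimage of $\mathrm{Sing}(Y)$, so every such blowup remains above $\mathrm{Sing}(Y)$. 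A final application of Hironaka, if needed, to restore the s.n.c.\ property of the total exceptional divisor, produces $p:Z\to Y$ as a composition of blowups along smooth centers over $\mathrm{Sing}(Y)$, with the required factorization $\pi_X:Z\to X$ and with $\Gamma:=p^{-1}(\mathrm{Sing}(Y))$ s.n.c., thereby giving (1) and (2).

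For (3), I would argue by induction along the chain of blowups defining $p$. The elementary step is the standard fact that if $b:W\to V$ is the blowup of $V$ along a coherent ideal sheaf, with exceptional divisor $E$, and $A$ is an ample line bundle on $V$, then $b^*(A^{\otimes m})\otimes \mathcal O_W(-E)$ is ample on $W$ for $m\gg 0$: the line bundle $\mathcal O_W(-E)$ is $b$-ample by construction, and any $b$-ample line bundle becomes ample after being tensored with a sufficiently positive pullback of an ample bundle from the base. Iterating this step along $Y=Y_0\leftarrow Y_1\leftarrow\cdots\leftarrow Z$, and observing that the pullback to $Z$ of each exceptional divisor introduced along the chain is a Cartier divisor supported on $\Gamma$, hence an integer linear combination of the components $\Gamma_i$, yields an ample line bundle on $Z$ of the required shape $p^*(\mathcal L^{\otimes a})\otimes \mathcal O_Z\!\left(\sum_{i=1}^{r}a_i\Gamma_i\right)$.

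The main delicate point is arranging $Z$ simultaneously to arise as a chain of blowups of $Y$ along smooth centers over $\mathrm{Sing}(Y)$ and to dominate the given resolution $X$; this is enabled by the fact that the indeterminacy of the birational map from $Y_k$ to $X$ is automatically confined over $\mathrm{Sing}(Y)$, so its resolution can be carried out without leaving the chain-of-blowups structure required by the inductive argument for (3).
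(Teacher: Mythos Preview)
Your argument is correct and complete; the construction of $Z$ via principalization followed by resolution of indeterminacy, and the inductive proof of (3) using relative ampleness of $\mathcal O(-E)$ along a chain of blowups, both work as stated.

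The paper takes a somewhat different route. For the construction of $Z$, rather than resolving the indeterminacy of $Y_k\dashrightarrow X$, it forms the fibred product $X\times_Y X_1$ (where $X_1$ is an auxiliary resolution with s.n.c.\ exceptional locus), takes the closure of $U$ inside it, and resolves that; this is morally the same graph-closure idea you use. The genuine difference is in the proof of (3). Instead of your direct induction on blowups, the paper embeds $Y$ in a projective space $\mathbb P$ via $\mathcal L^{\otimes h}$, performs the \emph{same} sequence of blowups on $\mathbb P$ to obtain a tower $C_h\to\cdots\to C_1=\mathbb P$, and invokes Fulton's computation that $\mathrm{Pic}(C_h)$ is generated by the pullback of the hyperplane class together with exceptional classes. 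This forces \emph{every} ample bundle on $C_h$, and hence its restriction to $X_1$, to have the required shape. Property (3) is then transferred from $X_1$ to $Z$ via Lazarsfeld's result on birational morphisms between smooth varieties. Your approach is more elementary and avoids the detour through $\mathrm{Pic}(C_h)$; the paper's approach has the conceptual payoff of showing that the form in (3) is not special but is the generic shape of an ample class on such a tower.
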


\begin{proof} Let $\pi_1:X_1\to Y$ be a resolution of singularities
verifying condition (2), i.e. such that $\pi_{1}
^{-1}({\rm{Sing}}(Y))$ is a s.n.c. divisor \cite[Theorem 4.1.3]{L1}.
One can construct $\pi_1:X_1\to Y$ via a sequence $X_1=B_{h}\to
B_{h-1}\to\dots \to B_1=Y$ of blowings-up along smooth centers
supported in the singular locus of $Y$ \cite[loc. cit.]{L1}. Fix an
ample line bundle $\mathcal L$ on $Y$, and an integer $h>0$ such
that $\mathcal L^{\otimes h}$ is very ample, corresponding to a
closed immersion $i:Y\subset \mathbb P$ of $Y$ in some projective
space $\mathbb P$ ($\mathcal L^{\otimes h}= i^*(\mathcal O_{\mathbb
P}(1))$). Each element $B_j$ of this sequence is contained in an
element $C_j$ of a sequence $C_{h}\to C_{h-1}\to\dots \to
C_1=\mathbb P$ of blowings-up along the {\it same} smooth centers,
starting from $\mathbb P$. By \cite[Proposition 6.7, (e)]{FultonIT},
we know that the Picard group of each $C_j$ is generated by the
pull-back of the hyperplane class of $\mathbb P$, and certain
divisor classes supported in the singular locus of $Y$. Therefore,
an ample bundle $\mathcal M$ on $C_h$ is {\it necessarily} the
pull-back of a positive power of $\mathcal O_{\mathbb P}(1)$,
tensored with a line bundle like $\mathcal O_{C_h}(E)$, with $E$
divisor supported in the singular locus of $Y$. Restricting such
ample bundle $\mathcal M$ on $X_1$, we get a line bundle as in (3).
This proves that there exists a  resolution $\pi_1:X_1\to Y$
satisfying conditions (2) and (3).

Now consider the fibred product $X\times_{Y} X_1$. It contains
$U:=Y\backslash {\rm{Sing}}(Y)$. Let  $\overline{U}$ be the closure
of $U$ in $X\times_{Y} X_1$. Applying \cite[loc. cit.]{L1} to the
blowing-up of $\overline{U}$ along $\overline{U}\backslash U$, we
may construct a resolution of singularities $\varphi: Z\to
\overline{U}$ of $\overline{U}$, inducing an isomorphism
$\varphi^{-1}(U)\cong U$, and such that
$\Gamma:=\varphi^{-1}(\overline{U}\backslash U)$ is a s.n.c. divisor
on $Z$. Composing  $\varphi: Z\to \overline{U}$ with the inclusion
$\overline{U}\subset X\times_{Y} X_1$,  and the projections
$X\times_{Y} X_1\to X$ and $X\times_{Y} X_1\to X_1$, we get maps
$\pi_{X_1}:Z\to X_1$, $\pi_X:Z\to X$ and $p:Z\to Y$, with
$p=\pi_1\circ \pi_{X_1} =\pi\circ \pi_X$:
$$
\begin{array}{ccccc}
& & Z \quad & & \\
\quad\quad\quad\quad\stackrel{\pi_{X_1}}\swarrow  & & &\searrow\stackrel{\pi_X}{}\\
 X_1  & & \downarrow\stackrel{p}{}\quad  & & X\\
\quad\quad\quad\quad\stackrel{\pi_1}{}\searrow&\,& &\swarrow\stackrel{\pi}{}\\
&& Y. \quad&\\
\end{array}
$$
The morphism  $p:Z\to Y$ is the map we are looking for. In fact,
$p:Z\to Y$ is a resolution of singularities of $Y$. Moreover, it
satisfies conditions (1) and (2) because $p=\pi\circ \pi_X$ and
$\Gamma=\varphi^{-1}(\overline{U}\backslash
U)=p^{-1}({\rm{Sing}}(Y))$ is a s.n.c. divisor. It remains to check
condition (3) for $p$.

To this aim, fix an ample bundle $\mathcal L$ on $Y$. Since
$\pi_1:X_1\to Y$ satisfies  condition $(3)$, for some $a>0$, the
line bundle $\pi_1^*(\mathcal L^{\otimes a})$, tensored with a
suitable line bundle supported in the singular locus of $Y$, is an
ample bundle $\mathcal L_1$ on $X_1$. Since $\pi_{X_1}:Z\to X_1$ is
a birational morphism between {\it smooth} projective varieties, by
\cite[Corollary 4.1.4 and Remark 4.1.5]{L1} we know that, for some
$b>0$, the line bundle $\pi_{X_1}^*(\mathcal L_1^{\otimes b})$,
tensored with a suitable line bundle supported in the singular locus
of $Y$, is an ample line bundle on $Z$. Since $p=\pi_1\circ
\pi_{X_1}$, it follows that also $p:Z\to Y$ satisfies condition
$(3)$.
\end{proof}

\begin{lemma}\label{reduction}
Let $\pi:X\to Y$ and $p:Z\to Y$ be resolutions of singularities of
$Y$. Set $U:=Y\backslash {\rm{Sing}}(Y)$, $G :=\pi
^{-1}({\rm{Sing}}(Y))$  and $\Gamma :=p ^{-1}({\rm{Sing}}(Y))$.
Assume there is a  morphism $\pi_X:Z\to X$ such that $p=\pi\circ
\pi_X$. Fix an integer $k>n$. If the map $H^{k-1}(\Gamma)\to
H^k(Y,U)$ vanishes, then also the map $ H^{k-1}(G)\to H^k(Y,U)$
vanishes.
\end{lemma}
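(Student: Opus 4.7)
The plan is to reduce the desired vanishing for $\pi$ to the assumed vanishing for $p$ by exploiting the morphism $\pi_X\colon Z\to X$ satisfying $p=\pi\circ\pi_X$. First, I would observe that $\pi_X$ restricts to a morphism $\pi_X|_\Gamma\colon \Gamma\to G$: if $z\in\Gamma=p^{-1}(\mathrm{Sing}(Y))$, then the identity $p=\pi\circ\pi_X$ gives $\pi(\pi_X(z))=p(z)\in\mathrm{Sing}(Y)$, hence $\pi_X(z)\in G$. This yields a pullback map $(\pi_X|_\Gamma)^*\colon H^{k-1}(G)\to H^{k-1}(\Gamma)$.

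Next, I would invoke the naturality, with respect to morphisms of resolutions over $Y$, of the construction of the map $\alpha\colon H^{k-1}(\text{exceptional locus})\to H^k(Y,U)$ given in \cite[Lemma 4.1]{DGF3}. Since $\pi_X$ is such a morphism of resolutions, naturality should yield the commutative diagram
\[
\begin{array}{ccc}
H^{k-1}(G) & \xrightarrow{\alpha_\pi} & H^k(Y,U)\\
{\scriptstyle (\pi_X|_\Gamma)^*}\downarrow & & \parallel\\
H^{k-1}(\Gamma) & \xrightarrow{\alpha_p} & H^k(Y,U).
\end{array}
\]
The hypothesis is that $\alpha_p=0$, so by commutativity $\alpha_\pi=\alpha_p\circ (\pi_X|_\Gamma)^*=0$, which is the desired vanishing.

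The main obstacle is justifying this naturality. Concretely, one has to revisit the construction of $\alpha$ in \cite[Lemma 4.1]{DGF3}---for instance via a Mayer--Vietoris distinguished triangle attached to the square $G\hookrightarrow X$, $\mathrm{Sing}(Y)\hookrightarrow Y$ together with proper base change for $R\pi_*\mathbb Q_X$, or via the long exact sequence of an appropriate pair---and verify that $\pi_X$ induces a morphism of the corresponding diagrams restricting to the identity on the $(Y,U)$ side. Once this functoriality is in place, the implication of the lemma is purely formal; in particular no injectivity of $(\pi_X|_\Gamma)^*$ is needed, only the commutativity of the square above.
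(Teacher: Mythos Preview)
Your approach is correct and genuinely different from the paper's. The paper does not invoke naturality of the map $\alpha$ itself; instead it uses the equivalence, established in \cite[Lemma 4.1 and 4.2]{DGF3}, between the vanishing of $H^{k-1}(G)\to H^k(Y,U)$ and the equality $\operatorname{Im}\bigl(H^{k-1}(Y)\to H^{k-1}(U)\bigr)=\operatorname{Im}\bigl(H^{k-1}(X)\to H^{k-1}(U)\bigr)$. With this reformulation the proof is a one-line sandwich argument: the commutative triangle of pullbacks $H^{k-1}(Y)\to H^{k-1}(X)\to H^{k-1}(Z)$ restricting to $H^{k-1}(U)$ gives the chain of inclusions $\operatorname{Im}(\alpha^*)\subseteq\operatorname{Im}(\beta^*)\subseteq\operatorname{Im}(\gamma^*)$, and the hypothesis forces the outer two to coincide, hence all three do.

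What this buys the paper is that no unpacking of the construction of $\alpha$ is needed: the equivalence in \cite{DGF3} is taken as a black box, and the remaining diagram involves only obvious restriction maps. Your route is more transparent conceptually---it shows directly that $\alpha_\pi$ factors through $\alpha_p$---but, as you note, it costs you the verification of functoriality of $\alpha$, which amounts to tracing through the identification $H^{k-1}(G_y)\cong H^{k-1}(\pi^{-1}(B_y))$ and the long exact sequence of the pair over a contractible neighbourhood $B_y$. That check is routine, so your argument is complete once spelled out; it simply trades one citation to \cite{DGF3} for a short functoriality computation.
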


\begin{proof}
Consider the following commutative diagram given by
the pull-back:
$$
\begin{array}{ccccc}
H^{k-1}(Y) &\stackrel{\pi^*}{\longrightarrow}  & H^{k-1}(X)&\stackrel{\pi_X^*}{\longrightarrow}  & H^{k-1}(Z)\\
\quad\quad\stackrel{\alpha^*_{k-1}}{}\searrow&\,&\downarrow\stackrel{\beta^*_{k-1}}{}&\swarrow\stackrel{\gamma^*_{k-1}}{}\\
&&H^{k-1}(U).&\\
\end{array}
$$
If the map $H^{k-1}(\Gamma)\to H^k(Y,U)$ vanishes,
from \cite[Lemma 4.1 and 4.2]{DGF3} it follows that
$\Im(\alpha^*_{k-1})=\Im(\gamma^*_{k-1})$. Since
previous diagram commutes, we also have
$\Im(\alpha^*_{k-1})\subseteq\Im(\beta^*_{k-1})\subseteq \Im (\gamma^*_{k-1})$.
Therefore we get $\Im(\alpha^*_{k-1})=\Im(\beta^*_{k-1})$.
Again from \cite[loc. cit.]{DGF3}, it follows that the map
$H^{k-1}(G)\to H^k(Y,U)$ vanishes.
\end{proof}

By previous Lemma \ref{resolutionf} and Lemma \ref{reduction}, in
order to prove the vanishing (\ref{vanishing})  we may assume that
$G$ is a s.n.c. divisor, and that there exists a very ample line
bundle $\mathcal L$ on $Y$, and integers $a_1,\dots,a_r$, such that
\begin{equation}\label{defM}
\mathcal M:=\pi^*(\mathcal L) \otimes \mathcal
O_X\left(\sum_{j=1}^{r}a_jG_j\right)
\end{equation}
is a very ample line bundle on $X$, the sum being taken over the
components of $G$. Denote by $[\mathcal M]\in H^2(X)$ its cohomology
class, and by $\mu\in H^2(G)$ its restriction to $G$. Denote by
$$
\eta_i:H_{n-i}(G)\to H^{n+i}(G),
$$
the  map obtained composing  the pull-back $H^{n+i}(X)\to
H^{n+i}(G)$, with the isomorphism induced by Poincar\'e duality
$H_{n-i}(X)\to H^{n+i}(X)$, and the push-forward $H_{n-i}(G)\to
H_{n-i}(X)$. By \cite[(3) p. 198]{DGF3} and \cite[Lemma 14, p.
351]{Spanier} we see that, for a fixed $i\geq 0$, to prove the
vanishing of the map $H^{n+i}(G)\to H^{n+i+1}(Y,U)$ ($U:=Y\backslash
{\rm{Sing}}(Y)$)  is equivalent to prove that the map $\eta_i$ is
onto. Now consider the map
$$
\epsilon_i:H_{n+i}(G)\to H^{n+i}(G)
$$
obtained composing $\eta_i:H_{n-i}(G)\to H^{n+i}(G)$ with the
cap-product $H_{n+i}(G)  \stackrel{\mu^i \cap
\,\cdot}{\rightarrow} H_{n-i}(G)$:
$$
\begin{array}{ccccc}
H_{n+i}(G) & & \stackrel{\epsilon_i}{\longrightarrow}  & & H^{n+i}(G)\\
\quad\quad\quad\quad\quad\stackrel{\mu^i \cap \,\cdot}{}\searrow&\,& &&\nearrow\,\stackrel{}{\eta_i}\\
 &&H_{n-i}(G).&&
\end{array}
$$
In order to prove that the map $\eta_i:H_{n-i}(G)\to H^{n+i}(G)$ is
onto, it suffices to prove that the map $\epsilon_i$ is an
isomorphism. Summing up, {\it in order to prove the vanishing
(\ref{vanishing}), it suffices to prove that the map
$\epsilon_i:H_{n+i}(G)\to H^{n+i}(G)$ is an isomorphism for all
$i\geq 0$}.

\begin{remark}\label{remarkPHL2n} For all $y\in {\rm{Sing}}(Y)$, set $G_y:=\pi^{-1}(y)$.
By \cite[(1.4) Corollaire]{N} we see that, for $1\leq i< n$,
$${}^p\mathcal H^{-i}(R\,\pi_*\mathbb Q_X[n])_y\cong
H_{n+i}(G_y)\quad {\text{and}}\quad {}^p\mathcal
H^{i}(R\,\pi_*\mathbb Q_X[n])_y\cong H^{n+i}(G_y).$$ Therefore, to
prove that the map $\epsilon_i:H_{n+i}(G)\to H^{n+i}(G)$ is an
isomorphism for $i\geq 1$, is equivalent to prove perverse
Hard-Lefschetz Theorem for $\pi$ and $\mathcal M$ \cite[(6.1)
Proposition]{N}, \cite[Theorem 3.3.1., p. 573]{DeCM3},
\cite[Theorem 5.4.8, p. 160]{Dimca2}.
\end{remark}

\medskip
Now we are going to prove that the map $\epsilon_i$ is an
isomorphism for all $i\geq 0$.

\smallskip
To this purpose, let $\widetilde G\to G$ be a {resolution} of $G$
\cite[p. 109, and Definition 5.14, p.119]{PSt}. And consider the
following commutative diagram:
$$
\begin{array}{ccccc}
H_{n+i}(\widetilde G) & & \stackrel{a}{\longrightarrow}  & & H_{n+i}(G)\\
\quad\quad\quad\quad\quad\searrow\stackrel{\gamma}{}&\,& &&\stackrel{b}{}\,\swarrow\quad\quad\quad\\
\stackrel{\beta}{}\downarrow&&H_{n+i}(X)&&\quad\downarrow\stackrel{\epsilon_i}{}\\
\quad\quad\quad\quad\quad\swarrow\stackrel{\rho}{}&\,& &&\stackrel{c}{}\,\searrow\quad\quad\quad\\
H^{n+i}(\widetilde G) & & \stackrel{d}{\longleftarrow} & &
H^{n+i}(G),
\end{array}
$$
where:

\smallskip
$\bullet$ the maps $a$, $b$ and $\gamma$ are the push-forward, and
$d$ is the pull-back;

\smallskip
$\bullet$ the map $c$ is the composition of the pull-back
$H^{n+i}(X)\to H^{n+i}(G)$, with the cup-product $H^{n-i}(X)
\stackrel{[\mathcal M]^i\,\cup \cdot}{\longrightarrow}
H^{n+i}(X)$, and the isomorphism $PD^{-1}:H_{n+i}(X)\to
H^{n-i}(X)$ induced by Poincar\'e Duality;

\smallskip
$\bullet$ the map $\rho$ is the composition of the pull-back
$H^{n+i}(X)\to H^{n+i}(\widetilde G)$, with the cup-product
$H^{n-i}(X) \stackrel{[\mathcal M]^i\,\cup \cdot}{\longrightarrow}
H^{n+i}(X)$, and the isomorphism $PD^{-1}:H_{n+i}(X)\to
H^{n-i}(X)$ induced by Poincar\'e Duality;

\smallskip
$\bullet$ $\beta:=\rho \circ \gamma$.

\smallskip
We need the following lemmas. We prove them in a moment.

\begin{lemma}\label{b}
The push-forward map $b:H_{n+i}(G)\to H_{n+i}(X)$ is injective.
\end{lemma}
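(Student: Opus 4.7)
The plan is to translate the injectivity of $b$ into a cohomological surjectivity statement via Poincar\'e-Lefschetz duality, and then to reduce that surjectivity to a mixed-Hodge-theoretic purity which can be established by exploiting the ampleness of $\mathcal{M}$ secured by the preceding reductions.

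Set $U:=X\setminus G$. Since $X$ is smooth projective, Poincar\'e duality identifies $H_{n+i}(X)$ with $H^{n-i}(X)$, and Lefschetz duality for the closed subspace $G\subset X$ identifies $H_{n+i}(G)$ with $H^{n-i}(X,U)$; under these identifications, the push-forward $b$ corresponds to the natural ``forget-supports'' map $H^{n-i}(X,U)\to H^{n-i}(X)$ appearing in the long exact sequence of the pair $(X,U)$. Consequently, $b$ is injective if and only if the restriction map $H^{n-i-1}(X)\to H^{n-i-1}(U)$ is surjective. Since $U$ is smooth quasi-projective with $G=X\setminus U$ a simple normal crossings divisor, Deligne's mixed Hodge theory endows $H^{n-i-1}(U)$ with a canonical mixed Hodge structure whose weights lie in $[n-i-1,\,2(n-i-1)]$, and the image of the restriction map is exactly the bottom-weight piece $W_{n-i-1}H^{n-i-1}(U)$. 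Thus the required surjectivity is equivalent to the purity of $H^{n-i-1}(U)$.

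By Deligne's weight spectral sequence associated to the log complex $\Omega^{\bullet}_X(\log G)$, the graded pieces $\mathrm{Gr}^{W}_{n-i-1+\ell}H^{n-i-1}(U)$ for $\ell\geq 1$ are the $E_2$-terms of a Gysin-type complex whose $E_1$ entries are cohomology groups of the smooth projective strata $G^{[p]}$ (the disjoint unions of the $(p+1)$-fold intersections of the components of $G$). To achieve the required vanishing I would apply the Hard Lefschetz theorem on each $G^{[p]}$ with respect to the restriction of the globally ample class $[\mathcal{M}]$, whose existence is precisely what Lemma~\ref{resolutionf} and Lemma~\ref{reduction} have been arranged to provide. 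The main technical hurdle is exactly the exactness of these Gysin complexes; its verification, through the combinatorics of the weight spectral sequence combined with the Lefschetz operator coming from $\mathcal{M}$, is the heart of the matter and is analogous in spirit to Navarro's argument in \cite[(5.1) Proposition]{N}.
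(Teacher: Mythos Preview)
Your first paragraph is correct: Lefschetz duality does convert the injectivity of $b$ into the surjectivity of $H^{n-i-1}(X)\to H^{n-i-1}(U)$, and the image of that restriction is indeed $W_{n-i-1}H^{n-i-1}(U)$, so the question becomes the purity of $H^{n-i-1}(U)$.

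The second paragraph, however, is both incomplete and far heavier than what the lemma requires. You yourself flag that the exactness of the Gysin complexes is ``the heart of the matter'' and leave it unverified; but that exactness, argued via Hard Lefschetz on the strata $G^{[p]}$ with respect to $[\mathcal M]$, is essentially of the same order of difficulty as the main vanishing the paper is after, and is precisely where the delicate work in Lemma~\ref{Steenbrinknn} lives. You are importing the hard part of the paper into what is meant to be the easy lemma. In particular, Lemma~\ref{b} does \emph{not} need the s.n.c.\ hypothesis on $G$ nor the special ample class $\mathcal M$; those reductions are arranged for Lemma~\ref{Steenbrinknn}, not here.

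The paper's proof is a one-line citation of \cite[Lemma~4.1]{DGF3}, and the underlying argument is elementary. Dualize in the other direction: over $\mathbb Q$, injectivity of $b$ is equivalent to surjectivity of the pull-back $H^{n+i}(X)\to H^{n+i}(G)$. From the exact sequence of the pair $(X,G)$ this amounts to injectivity of $H^{n+i+1}(X,G)\to H^{n+i+1}(X)$. Now $H^{n+i+1}(X,G)\cong H^{n+i+1}_c(U)$, and since $U\cong Y\setminus{\rm Sing}(Y)$ with ${\rm Sing}(Y)$ finite, the compactly-supported sequence gives $H^{n+i+1}_c(U)\cong H^{n+i+1}(Y)$ for $n+i+1\ge 2$. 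Under these identifications the map becomes $\pi^*:H^{n+i+1}(Y)\to H^{n+i+1}(X)$, which is injective because $\pi$ is birational. No mixed Hodge theory, no weight spectral sequence, no $\mathcal M$.
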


\begin{lemma}\label{surnn}
The push-forward map $a:H_{n+i}(\widetilde G)\to H_{n+i}(G)$ is
onto.
\end{lemma}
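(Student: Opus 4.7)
The plan is to reduce the surjectivity of $a$ to a purity statement via mixed Hodge theory. Since both $\widetilde G$ and $G$ are projective, hence compact, the push-forward $a$ is transpose to the pull-back $a^{*}\colon H^{n+i}(G)\to H^{n+i}(\widetilde G)$, so $a$ is surjective if and only if $a^{*}$ is injective. By a classical result of Deligne in mixed Hodge theory (see e.g.\ \cite{PSt}), for any resolution $\widetilde G\to G$ of a projective variety $G$, the kernel of $a^{*}$ in degree $k$ coincides with the sub-mixed Hodge structure $W_{k-1}H^{k}(G)$. The lemma therefore reduces to proving that $H^{n+i}(G)$ is pure of weight $n+i$ for every $i\ge 0$.

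To establish this purity I would invoke Deligne's weight spectral sequence for the simple normal crossing variety $G$,
\[
E_{1}^{p,q}=H^{q}\bigl(G^{(p+1)}\bigr)\;\Longrightarrow\;H^{p+q}(G),
\]
where $G^{(s)}$ denotes the disjoint union of the $s$-fold intersections of the components $G_{j}$ of $G$. This sequence degenerates at $E_{2}$ and satisfies $E_{\infty}^{p,q}=\mathrm{Gr}^{W}_{q}H^{p+q}(G)$. Purity of $H^{n+i}(G)$ thus amounts to the vanishing $E_{2}^{p,\,n+i-p}=0$ for every $p\ge 1$, i.e.\ to the exactness at positions $\ge 1$ of each cochain complex
\[
H^{n+i-p}\bigl(G^{(1)}\bigr)\longrightarrow H^{n+i-p}\bigl(G^{(2)}\bigr)\longrightarrow\cdots.
\]

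To obtain this exactness, I would exploit the ample line bundle $\mathcal{M}$ of (\ref{defM}): its restriction to each smooth projective intersection $G^{(s)}$ is again ample (an ample line bundle restricts to an ample line bundle on any closed subvariety), so Hard Lefschetz applies on every $G^{(s)}$. Propagating these Lefschetz isomorphisms through the semisimplicial restriction maps $G^{(s+1)}\to G^{(s)}$---which is precisely the content of the slight generalization of Steenbrink's lemma stated below as Lemma \ref{Steenbrinknn}---should then yield the required vanishing of $E_{2}^{p,\,n+i-p}$ for $p\ge 1$.

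The main obstacle will be this final step: carrying out the Steenbrink-type Lefschetz bookkeeping on the tower of intersections $G^{(s)}$ in order to control the differentials of the weight spectral sequence above the middle degree. This is precisely the reason the slight generalization of Steenbrink's lemma from \cite[p.~288]{N} is needed in our overall strategy.
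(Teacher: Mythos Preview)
Your reduction is exactly the paper's: by duality, surjectivity of $a$ is injectivity of the pull-back $a^{*}\colon H^{n+i}(G)\to H^{n+i}(\widetilde G)$, and by Deligne's description of the weight filtration $\ker a^{*}=W_{n+i-1}H^{n+i}(G)$, so everything comes down to purity of $H^{n+i}(G)$.

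Where you diverge is in how you establish purity. The paper does this in one line, using a fact you have overlooked: the restriction $H^{n+i}(X)\to H^{n+i}(G)$ is \emph{surjective} (this is \cite[Lemma 4.1]{DGF3}, already invoked earlier in the argument). Since $X$ is smooth and projective, $H^{n+i}(X)$ is pure of weight $n+i$; a surjection of mixed Hodge structures is strict, so $W_{n+i-1}H^{n+i}(G)=0$ follows immediately. No spectral sequence, no Hard Lefschetz on the strata, no Steenbrink-type bookkeeping is needed.

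Your proposed route through the weight spectral sequence of the s.n.c.\ variety $G$ is not wrong in spirit, but it is left genuinely incomplete, and the appeal to Lemma~\ref{Steenbrinknn} to close it is a misreading. Lemma~\ref{Steenbrinknn} asserts $\ker\beta\subseteq\ker\gamma$ for the specific maps $\gamma\colon H_{n+i}(\widetilde G)\to H_{n+i}(X)$ and $\beta=\rho\circ\gamma$ built from the push-forward to $X$ and the $[\mathcal M]^{i}$-twisted pull-back; it involves only $\widetilde G=\bigsqcup_{j}G_{j}$ and $X$, not the higher intersections $G^{(s)}$, and it says nothing about the differentials of the weight spectral sequence of $G$. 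So the ``final step'' you flag as the main obstacle is not supplied by that lemma, and your argument for purity has a real gap. The fix is simply to use the surjectivity of $H^{n+i}(X)\to H^{n+i}(G)$ as above.
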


\begin{lemma}\label{Steenbrinknn}
$\ker \beta\subseteq \ker \gamma$.
\end{lemma}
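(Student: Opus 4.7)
The plan is to recast $\ker\beta\subseteq\ker\gamma$ as a non-degeneracy statement for a bilinear form on a sub-Hodge structure of $H^{n-i}(X)$, and then to establish that non-degeneracy via hard Lefschetz, Hodge--Riemann and a slight generalization of Steenbrink's lemma. Concretely, writing $\widetilde\iota\colon\widetilde G\to X$ for the composition of the resolution $\widetilde G\to G$ with the inclusion $G\hookrightarrow X$, I would introduce on $H_{n+i}(\widetilde G)$ the symmetric bilinear form
\begin{equation*}
Q(x,y) \;:=\; \int_X PD^{-1}_X(\gamma(x))\cup PD^{-1}_X(\gamma(y))\cup [\mathcal M]^i.
\end{equation*}
Using the identity $PD^{-1}_X\circ\gamma = \widetilde\iota_!\circ PD^{-1}_{\widetilde G}$ and the projection formula for $\widetilde\iota$, one checks that $Q(x,y) = \int_{\widetilde G} PD^{-1}_{\widetilde G}(x)\cup\beta(y)$. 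Hence by Poincar\'e duality on $\widetilde G$, $y\in\ker\beta$ iff $Q(\,\cdot\,,y)\equiv 0$, whereas trivially $y\in\ker\gamma$ iff $PD^{-1}_X(\gamma(y))=0$. Setting $V:=\widetilde\iota_!\bigl(H^{n-2-i}(\widetilde G)\bigr)\subseteq H^{n-i}(X)$ and $q(u,v):=\int_X u\cup v\cup[\mathcal M]^i$, the inclusion $\ker\beta\subseteq\ker\gamma$ is thus equivalent to the non-degeneracy of $q|_V$.

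Since $\widetilde G$ is smooth projective of dimension $n-1$ and $\widetilde\iota_!$ is a morphism of pure Hodge structures up to a Tate twist, the subspace $V$ is a pure sub-Hodge structure of the weight-$(n-i)$ structure $H^{n-i}(X)$. The ampleness of $\mathcal M$ on $X$ together with hard Lefschetz gives the non-degeneracy of $q$ on the whole of $H^{n-i}(X)$, and the Hodge--Riemann bilinear relations yield, after the standard sign twist along the Lefschetz decomposition, a polarization of $H^{n-i}(X)$. A crucial preliminary point is that $\mu:=\widetilde\iota^*[\mathcal M]$ is itself ample on $\widetilde G$: this follows from the explicit shape $\mathcal M=\pi^*\mathcal L\otimes\mathcal O_X(\sum a_jG_j)$ ensured by Lemma~\ref{resolutionf}, so that hard Lefschetz also holds on $\widetilde G$ with respect to~$\mu$. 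The slight generalization of Steenbrink's lemma \cite[p.~288]{N}, applied to the Gysin morphism $\widetilde\iota_!$ and to these two instances of hard Lefschetz, then yields the required non-degeneracy of $q|_V$.

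The main obstacle, and the precise point where the generalization of Steenbrink's lemma is indispensable, is the reconciliation of signs: the form $q$ and the Hodge--Riemann polarization on $H^{n-i}(X)$ differ by $(-1)^k$ on the Lefschetz summand $[\mathcal M]^kP^{n-i-2k}(X)$, so the non-degeneracy of the polarization restricted to~$V$ does not by itself imply the non-degeneracy of $q|_V$. It is the Gysin description of~$V$ together with hard Lefschetz on both $X$ and $\widetilde G$ that force the signs to align on the sub-Hodge structure~$V$, delivering the required non-degeneracy.
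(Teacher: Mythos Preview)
Your bilinear-form reformulation is correct and coincides with the paper's computation: via the projection formula, $Q(x,y)=\int_X PD^{-1}(\gamma(x))\cup PD^{-1}(\gamma(y))\cup[\mathcal M]^i$ satisfies $Q(x,y)=\int_{\widetilde G}PD^{-1}(x)\cup\beta(y)$, so $\ker\beta\subseteq\ker\gamma$ is equivalent to the non-degeneracy of $q$ on the Gysin image $V$. The paper's evaluation of $\overline{\gamma'(v)}\cup\gamma'(v)\cup[\mathcal M]^i$ is precisely $Q(\bar v,v)$.

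The gap is that you never \emph{prove} this non-degeneracy: you defer it to ``the slight generalization of Steenbrink's lemma \cite[p.~288]{N}''. But in this paper that phrase denotes Lemma~\ref{Steenbrinknn} itself (see the last paragraph of the Introduction), so the appeal is circular. Steenbrink's original statement in \cite{N} treats a single smooth divisor whose class \emph{is} the polarization; here the $G_j$ are not hyperplane sections for $\mathcal M$, and $V$ is the sum of Gysin images from several components, so the original lemma does not apply. Your last paragraph correctly isolates the obstruction---the sign alternation along the Lefschetz decomposition---but the claim that hard Lefschetz on $X$ and on $\widetilde G$ ``forces the signs to align'' is unsupported, and in fact the paper never uses hard Lefschetz on $\widetilde G$.

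The missing idea is the use of the specific shape of $\mathcal M$. Because $\pi(G_j)$ is a point one has $[\pi^*\mathcal L]\cup[\mathcal O_X(G_j)]=0$ in $H^4(X)$, whence $[\mathcal M]^{i+1}=[\pi^*\mathcal L]^{i+1}+\bigl[\mathcal O_X(\sum_ja_jG_j)\bigr]^{i+1}$. With this identity and the projection formula one checks, for $v\in\ker\beta'$, that $\gamma'(v)\cup[\mathcal M]^{i+1}=0$, i.e.\ $\gamma'(v)$ is \emph{primitive}. Primitivity removes the sign ambiguity: the vanishing $Q(\bar v,v)=0$ together with Hodge--Riemann then forces $\gamma'(v)=0$. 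Your proposal never invokes $[\pi^*\mathcal L]\cup[\mathcal O_X(G_j)]=0$, and without it (or some equivalent device) the sign problem you flagged cannot be resolved.
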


\medskip\noindent
Taking into account previous lemmas, a simple diagram chase proves
that $\epsilon_i$ is injective, hence an isomorphism because
$H_{n+i}(G)$ and $H^{n+i}(G)$ have the same dimension. In fact,
assume $\epsilon_i(x)=0$. Let $y\in H_{n+i}(\widetilde G)$ such that
$a(y)=x$. Then $d(\epsilon_i(a(y)))=0$, i.e. $\beta (y)=0$. Then
$\gamma(y)=0$, and therefore $b(a(y))=0$. It follows that $a(y)=0$,
i.e. $x=0$.

\medskip
To conclude, we are going to prove previous lemmas. We only prove
Lemma \ref{surnn} and Lemma \ref{Steenbrinknn} because Lemma \ref{b}
follows from \cite[Lemma 4.1]{DGF3}. We may assume all cohomology
and homology groups are with $\mathbb C$-coefficients.

\medskip
\begin{proof}[Proof of Lemma \ref{surnn}]
Since $X$ is smooth, by \cite[Proposition 4.20, p. 102]{PSt} we
know that $H^{n+i}(X)$ has no weights of order $n+i-1$, i.e.
$W_{n+i-1}H^{n+i}(X)=0$. Since the pull-back $H^{n+i}(X)\to
H^{n+i}(G)$ is onto \cite[Lemma 4.1]{DGF3}, and is a morphism of
mixed Hodge structure, it follows that also $H^{n+i}(G)$ has no
weights of order $n+i-1$, i.e. $W_{n+i-1}H^{n+i}(G)=0$
\cite[Corollary 3.6, p. 65, and Theorem 5.33, (iii), p. 126]{PSt}.
On the other hand, since $\widetilde G\to G$ is a resolution (and
$\widetilde G$ and $G$ are projective), we have
$W_{n+i-1}H^{n+i}(G)=\ker(H^{n+i}(G)\to H^{n+i}(\widetilde G))$
\cite[Corollary 5.42, p. 133, and Remark 5.15, 1), p. 119]{PSt}.
Therefore, the pull-back $H^{n+i}(G)\to H^{n+i}(\widetilde G)$ is
injective. This is equivalent to say that the push-forward map
$H_{n+i}(\widetilde G)\to H_{n+i}(G)$ is onto.
\end{proof}

\medskip
\begin{proof}[Proof of Lemma \ref{Steenbrinknn}]
Since $G$ is a s.n.c. divisor, its irreducible components $G_1$,
$\dots$, $G_r$ are smooth, and $\widetilde G$ is simply the disjoint
union of them. Via Poincar\'e Duality  on each components of $G$,
and on $X$, we may identify $H_{n+i}(\widetilde G)$ with
$H^{n-i-2}(\widetilde G)$, $H_{n+i}(X)$ with $H^{n-i}(X)$, and the
push-forward $\gamma: H_{n+i}(\widetilde G)\to H_{n+i}(X)$ with a
Gysin map $\gamma':H^{n-i-2}(\widetilde G)\to H^{n-i}(X)$. Hence, to
prove that $\ker \beta\subseteq \ker \gamma$ is equivalent to prove
that
$$\ker \beta'\subseteq \ker \gamma',$$ where $\beta':=\rho'\circ
\gamma'$, and $\rho'$ denotes the composition of the pull-back
$H^{n+i}(X)\to H^{n+i}(\widetilde G)$, with the cup-product
$H^{n-i}(X) \stackrel{[\mathcal M]^i\,\cup \cdot}{\longrightarrow}
H^{n+i}(X)$:
$$
\begin{array}{ccccc}
& & H^{n-i-2}(\widetilde G) \quad & & \\
\quad\quad\quad\quad  & & &\searrow\stackrel{\gamma'}{}\\
 & & \downarrow\stackrel{\beta'}{}\quad  & & H^{n-i}(X)\\
\quad\quad\quad\quad&\,& &\swarrow\stackrel{\rho'}{}\\
&& H^{n+i}(\widetilde G). \quad&\\
\end{array}
$$

Let $v\in \ker\beta'$.

\smallskip\noindent
Since the Gysin map and the pull-back are morphisms of Hodge
structures \cite[Corollary 1.13, p. 17, and  Lemma 1.19, p.19]{PSt},
we may assume that $v\in H^{p,q}(\widetilde G)$, with $p+q=n-i-2$.
Set:
$$
\lambda:H^{n-i}(X)\to H^{n+i+2}(X), \quad \lambda(x):=x\cup
[\mathcal M]^{i+1},
$$
$$
\epsilon:H^{n-i}(X)\to H^{n+i+2}(X), \quad \epsilon(x):=x\cup
\left([\mathcal M]^{i+1}-\left[\mathcal
O_X\left(\sum_{j=1}^{r}a_jG_j\right)\right]^{i+1}\right),
$$
$$
\delta:H^{n-i}(X)\to H^{n+i+2}(X), \quad \delta(x):=x\cup
\left[\mathcal O_X\left(\sum_{j=1}^{r}a_jG_j\right)\right]^{i+1},
$$
$$
\gamma_j: H^{n+i}(G_j)\to H^{n+i+2}(X), \quad \gamma_j:={\text{the
Gysin map}}.
$$
We have:
\begin{equation}\label{comp}
\lambda(\gamma'(v))=(\epsilon+\delta)(\gamma'(v))=\epsilon(\gamma'(v))+\delta(\gamma'(v)).
\end{equation} Notice that, since the singular locus of $Y$ is finite,  we have:
\begin{equation}\label{LG1}
[\pi^*(\mathcal L)]\cup [\mathcal O_X(G_j)]=0\in H^4(X)
\end{equation}
for every component $G_j$ of $G$. Hence (compare with (\ref{defM})):
\begin{equation}\label{LG2}
[\mathcal M]^{i+1}= [\pi^*(\mathcal L)]^{i+1} +\left[\mathcal
O_X\left(\sum_{j=1}^{r}a_jG_j\right)\right]^{i+1}.
\end{equation}
Taking into account (\ref{LG1}), (\ref{LG2}), and the projection
formula \cite[p. 424]{PSt}, it follows that:
$$
\epsilon(\gamma'(v))=\gamma'(v)\cup [\pi^*(\mathcal L)]^{i+1}=0.
$$
Continuing previous computation (\ref{comp}), and using again
(\ref{LG1}) and (\ref{LG2}) as before, we get:
$$
\lambda(\gamma'(v))=\delta(\gamma'(v))=\left[\left(\sum_{j=1}^{r}a_j\gamma_j\right)\circ
\rho' \circ \gamma' \right](v)=
\left[\left(\sum_{j=1}^{r}a_j\gamma_j\right)\circ \beta'
\right](v)=0.
$$
This proves that, if $v\in\ker\beta'$, then $\gamma'(v)$ is a
primitive cohomology class \cite[p. 25 and 26]{PSt}.

\smallskip
Now denote by $f_j:G_j\to X$ the inclusion, that we may see as the
composition of the natural map $\widetilde G\to X$, with the
inclusion $l_j:G_j\to \widetilde G$. Denote by $v_1,\dots,v_r$ the
components of $v\in H^{n-i-2}(\widetilde G)=\oplus_{j=1}^{r}
H^{n-i-2}(G_j)$. We have:
$$
{\overline{\gamma'(v)}}\cup \gamma'(v)\cup [\mathcal M]^i=
{{\gamma'(\overline v)}}\cup \gamma'(v)\cup [\mathcal M]^i=
\sum_{j=1}^{r} {{\gamma'(\overline {v_j})}}\cup \gamma'(v)\cup
[\mathcal M]^i.
$$
By the projection formula  we may write:
$$
{{\gamma'(\overline {v_j})}}\cup \left(\gamma'(v)\cup [\mathcal
M]^i\right)={\overline {v_j}}\cup \left(f^*_j(\gamma'(v)\cup
[\mathcal M]^i)\right).
$$
Since
$$
f^*_j(\gamma'(v)\cup [\mathcal M]^i)=(l^*_j\circ \rho'\circ
\gamma')(v)=(l^*_j\circ \beta')(v)=0\in H^{n+i}(G_j),
$$
we deduce
$$
{\overline {v_j}}\cup \left(f^*_j(\gamma'(v)\cup [\mathcal
M]^i)\right)=0\in H^{2n-2}(G_j)\cong \mathbb C,
$$
and so
$$
{\overline{\gamma'(v)}}\cup \gamma'(v)\cup [\mathcal M]^i=0\in
H^{2n}(X)\cong \mathbb C.
$$
Summing up, $\gamma'(v)$ lies in $H^{p+1,q+1}(X)$, is primitive, and
${\overline{\gamma'(v)}}\cup \gamma'(v)\cup [\mathcal M]^i=0$. By
the Hodge-Riemann bilinear relations it follows that $\gamma'(v)=0$.
\end{proof}

\end{document}